\newcommand{\addresseshere}{%
  \enddoc@text\let\enddoc@text\relax
}
\newtheorem{thm}{Theorem}[section]
\newtheorem{prop}[thm]{Proposition}
\newtheorem{lem}[thm]{Lemma}
\theoremstyle{definition}
\theoremstyle{remark}
\newtheorem{remark}[thm]{Remark}
\numberwithin{equation}{section}
\newcommand{\ff}{\mathfrak{f}}
\newcommand{\fg}{\mathfrak{g}}
\newcommand{\Perm}{\mathrm{Perm}}
\newcommand{\Hom}{\mathrm{Hom}}
\newcommand{\Aut}{\mathrm{Aut}}
\newcommand{\Inn}{\mathrm{Inn}}
\newcommand{\Out}{\mathrm{Out}}
\newcommand{\Hol}{\mathrm{Hol}}
\newcommand{\res}{\mathrm{res}}
\newcommand{\ep}{\epsilon}
\newcommand{\E}{\mathcal{E}}
\newcommand{\Cent}{\mathrm{Cent}}
\begin{document}

\large 

\title{Hopf-Galois structures on a Galois $S_n$-extension}

\author{Cindy (Sin Yi) Tsang}
\address{School of Mathematics, Sun Yat-Sen University, Zhuhai}
\email{zengshy26@mail.sysu.edu.cn}\urladdr{http://sites.google.com/site/cindysinyitsang/} 

\date{\today}

\maketitle

\begin{abstract}In this paper, we shall determine the exact number of Hopf-Galois structures on a Galois $S_n$-extension, where $S_n$ denotes the symmetric group on $n$ letters.
\end{abstract}

\tableofcontents

\vspace{-5mm}

\section{Introduction}

Let $L/K$ be a finite Galois extension with Galois group $G$. Write $\Perm(G)$ for the symmetric group of $G$. Recall that a subgroup $\mathcal{N}$ of $\Perm(G)$ is said to be \emph{regular} if the map
\[ \xi_\mathcal{N}: \mathcal{N}\longrightarrow G;\hspace{1em}\xi_\mathcal{N}(\eta) = \eta(1)\]
is bijective, or equivalently, if the $\mathcal{N}$-action on $G$ is both transitive and free. For example, the images of the left and right regular representations
\[\begin{cases}
\lambda: G\longrightarrow \Perm(G);\hspace{1em}\lambda(\sigma) = (x\mapsto \sigma x),\\
\rho:G\longrightarrow\Perm(G);\hspace{1em}\rho(\sigma) = (x\mapsto x\sigma^{-1}),
\end{cases}\]
respectively, are both regular subgroups of $\Perm(G)$. By work of C. Greither and B. Pareigis in \cite{GP}, there is an explicit one-to-one correspondence between Hopf-Galois structures on $L/K$ and elements in 
\[ \E(G) = \{\mbox{regular subgroups of $\Perm(G)$ normalized by $\lambda(G)$}\}. \]
In turn, this set may be written as a disjoint union of the subsets
\[ \E(G,N) = \left\{ \begin{array}{c}\mbox{regular subgroups of $\Perm(G)$ which are}
\\ \mbox{isomorphic to $N$ and normalized by $\lambda(G)$}\end{array}\right\},\]
where $N$ ranges over all groups of order $|G|$, up to isomorphism. Hence, it is of interest to enumerate $\E(G)$ and $\E(G,N)$. A useful tool is the formula
\begin{equation}\label{B formula}\#\E(G,N) = \frac{|\Aut(G)|}{|\Aut(N)|}\cdot \#\left\{\begin{array}{c}\mbox{regular subgroups in $\Hol(N)$}\\\mbox{which are isomorphic to $G$}\end{array}\right\},\end{equation}
where $\Hol(N)$ denotes the \emph{holomorph} of $N$ and is given by
\begin{equation}\label{Hol(N)}\Hol(N)=\rho(N)\rtimes \Aut(N).\end{equation}
This was shown by N. P. Byott in \cite{By96}; or see \cite[Section 7]{Childs book}. We shall refer the reader to \cite[Chapter 2]{Childs book} for more background on Hopf-Galois structures.

\vspace{1.5mm}

For each $n\in\mathbb{N}$, we shall use the standard notation:
\begin{align*}
S_n & = \mbox{the symmetric group on $n$ letters},\\
A_n & = \mbox{the alternating group on $n$ letters},\\
C_n & = \mbox{the cyclic group of order $n$}.
\end{align*}
The purpose of this paper is to determine the exact size of $\E(S_n,N)$ for each group $N$ of order $n!$ and hence the size of $\E(S_n)$. For $n=1,2$, trivially
\begin{align*} \#\E(S_1) &= \#\E(S_1,S_1) = 1, \\\#\E(S_2) &= \#\E(S_2,S_2) = 1.\end{align*}
For $n=3$, by \cite[Corollary 6.5]{Byott pq}, we already know that
\[ \#\E(S_3) = \#\E(S_3,S_3) + \#\E(S_3,C_6) = 2 + 3 = 5.\]
For $n=4$, using the formula (\ref{B formula}) and the \textsc{magma} code $1$ in the appendix, we may compute that:

\begin{thm}\label{thm4}
Let $N$ be a group of order $4!$. Then, we have
\[ \#\E(S_4,N) = \begin{cases}
8 & \mbox{for $N\simeq S_4$},\\ 36 &\mbox{for $N\simeq A_4\times C_2$}, \\ 24 & \mbox{for $N\simeq S_3\times C_2\times C_2$}, \\ 48 & \mbox{for $N\simeq C_6\times C_2\times C_2$},\\ 0 & \mbox{otherwise}.
\end{cases}\] 
In particular, we have $\#\E(S_4) = 116$.
\end{thm}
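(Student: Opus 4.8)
We prove Theorem~\ref{thm4} by a finite computation built on the formula~(\ref{B formula}). Since $S_4$ has trivial centre and no nontrivial outer automorphisms, $\Aut(S_4)\cong S_4$ has order $24$, and (\ref{B formula}) becomes
\[ \#\E(S_4,N)=\frac{24}{|\Aut(N)|}\cdot r(N),\qquad r(N):=\#\left\{\begin{array}{c}\mbox{regular subgroups of $\Hol(N)$}\\\mbox{isomorphic to $S_4$}\end{array}\right\}.\]
The plan is to run over the $15$ isomorphism types of groups $N$ of order $4!=24$, read off $|\Aut(N)|$ (routine from the structure of $N$), and in each case determine $r(N)$; everything then hinges on these numbers.

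For most $N$ one checks that $r(N)=0$ by a structural observation. Since $\rho(N)\trianglelefteq\Hol(N)$ by (\ref{Hol(N)}), any regular subgroup $\mathcal{N}\cong S_4$ of $\Hol(N)$ satisfies
\[ \mathcal{N}\cap\rho(N)\ \trianglelefteq\ \mathcal{N}\cong S_4,\qquad \mathcal{N}\cap\rho(N)\ \hookrightarrow\ N,\qquad |\mathcal{N}\cap\rho(N)|\ \ge\ \frac{|N|^2}{|\Hol(N)|}=\frac{24}{|\Aut(N)|}.\]
The normal subgroups of $S_4$ are only $1$, the Klein four-group $V_4$, $A_4$ and $S_4$, so $\mathcal{N}\cap\rho(N)$ is one of these. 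If $\mathcal{N}\cap\rho(N)=S_4$ then $\rho(N)=\mathcal{N}$ and $N\cong S_4$; if $\mathcal{N}\cap\rho(N)\cong A_4$ then $N$ contains an $A_4$-subgroup, which among groups of order $24$ forces $N\cong S_4$ or $N\cong A_4\times C_2$. Otherwise $\mathcal{N}\cap\rho(N)\in\{1,V_4\}$, so either $N$ contains a copy of $V_4$, or $\mathcal{N}\cap\rho(N)=1$ (whence $|\Aut(N)|\ge 24$, and $\mathcal{N}$ is a complement to $\rho(N)$ in $\Hol(N)$ when $|\Aut(N)|=24$). For each of the remaining $N\notin\{S_4,\,A_4\times C_2,\,S_3\times C_2\times C_2,\,C_6\times C_2\times C_2\}$ it therefore suffices to check that no regular $\mathcal{N}\cong S_4$ with $\mathcal{N}\cap\rho(N)\in\{1,V_4\}$ exists in $\Hol(N)$; this is a short case-by-case verification --- performed by classifying the complements of $\rho(N)$ in $\Hol(N)$ (equivalently, the crossed homomorphisms $\Aut(N)\to N$), or by listing the ways a $V_4\le\rho(N)$ can lie inside a regular $S_4$ --- which yields $r(N)=0$ for all these $N$.

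It remains to evaluate $r(N)$ for the four surviving groups. Here $|\Aut(S_4)|=|\Aut(A_4\times C_2)|=24$, $|\Aut(S_3\times C_2\times C_2)|=144$, and, using $C_6\times C_2\times C_2\cong C_3\times C_2^{3}$, $|\Aut(C_6\times C_2\times C_2)|=|\Aut(C_3)|\cdot|\mathrm{GL}_3(\mathbb{F}_2)|=2\cdot168=336$. By the displayed formula the required values are $r(S_4)=8$, $r(A_4\times C_2)=36$, $r(S_3\times C_2\times C_2)=144$ and $r(C_6\times C_2\times C_2)=672$, and each is a finite search inside $\Hol(N)$. These four searches --- carried out by \textsc{magma} code~$1$ in the appendix --- form the substance of the proof; the heaviest is $N\cong C_6\times C_2\times C_2$, where $|\Hol(N)|=24\cdot336=8064$ and one must locate all $672$ regular subgroups isomorphic to $S_4$. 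Given these numbers, (\ref{B formula}) produces the stated table, and summing the four nonzero terms gives $\#\E(S_4)=8+36+24+48=116$. The main obstacle is precisely this last enumeration: the vanishing of $r(N)$ on the other eleven groups follows from the structural restriction above, whereas the four surviving counts do not collapse to a short argument and must be obtained by direct computation.
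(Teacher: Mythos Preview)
Your approach and the paper's coincide at the essential step: both invoke formula~(\ref{B formula}) and \textsc{magma} code~$1$. In the paper, however, that \emph{is} the entire proof --- the code loops over all fifteen isomorphism types of groups of order $24$ and returns both the four nonzero values and the eleven zeros in one pass; no structural argument is offered or needed.

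Your added structural layer (analysing $\mathcal{N}\cap\rho(N)$ via the normal subgroups of $S_4$) is a reasonable heuristic, but as written it does not finish the job. Observing that $\mathcal{N}\cap\rho(N)\in\{1,V_4\}$ forces either $V_4\hookrightarrow N$ or $|\Aut(N)|\ge 24$ does not yet single out the four surviving groups: several of the eleven ``bad'' groups (e.g.\ $C_{12}\times C_2$, $C_3\times D_8$, $S_3\times C_4$, $D_{24}$) also contain a copy of $V_4$, and some have $|\Aut(N)|\ge 24$. You acknowledge this and defer to ``a short case-by-case verification'', but you do not carry it out; so the vanishing of $r(N)$ for those eleven $N$ is asserted, not proved. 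Since \textsc{magma} code~$1$ already handles all fifteen groups, the cleanest fix is simply to say so --- which is exactly the paper's proof. If you want to keep the structural discussion, present it as motivation rather than as a replacement for the computation on the vanishing cases.
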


\begin{remark}The first and last cases in Theorem~\ref{thm4} were known previously, by \cite[Theorem 7]{Childs simple} and \cite[Proposition 4]{AnSn}, respectively.
\end{remark}

For $n\geq 5$, it was computed in \cite[Corollaries 6 and 10]{Childs simple}, respectively, that
\begin{align}\label{n count}
\#\E(S_n,S_n) &= 2\cdot\sum_{\substack{0\leq k\leq n/2\\k\small \mbox{ is even}}}\frac{n!}{(n-2k)!\cdot 2^k\cdot k!}, \\\notag
\#\E(S_n,A_n\times C_2) &=2\cdot\sum_{\substack{0\leq k\leq n/2\\k\small \mbox{ is odd}}}\frac{n!}{(n-2k)!\cdot 2^k\cdot k!}.\end{align}
Also, the case when $n=6$ is slightly different because $S_6$ has an exceptional outer automorphism. In \cite[p. 91]{Childs simple}, it was shown that 
\begin{equation}\label{6 count}\#\E(S_6,M_{10}) = 72 \mbox{ and }\#\E(S_6,\mbox{PGL}(2,9)) =0,\end{equation}
where $M_{10}$ denotes the Mathieu group of degree $10$. Our main result is that $\#\E(S_n,N) = 0$ for all other choices of $N$. More precisely, we shall prove:

\begin{thm}\label{thm56}For $n\geq 5$, let $N$ be a group of order $n!$. Then, we have
\[ \#\E(S_n,N) = 0 \mbox{ for }N\not\simeq \begin{cases}S_n, A_n\times C_2 &\mbox{if $n\neq 6$}, \\ S_6, A_6\times C_2, M_{10},\mathrm{PGL}(2,9)&\mbox{if $n=6$}.\end{cases}\]
In particular, we have
\[\#\E(S_n) = \begin{cases}2\cdot\sum\limits_{0\leq k\leq n/2}\dfrac{n!}{(n-2k)!\cdot 2^k\cdot k!}&\mbox{if $n\neq6$},\\ 224 &\mbox{if $n=6$}.\end{cases}\]
\end{thm}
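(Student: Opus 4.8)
The plan is to run the problem through Byott's translation (\ref{B formula}) and then analyse, purely group-theoretically, how a regular copy of $S_n$ can lie inside $\Hol(N)$, exploiting that for $n\ge 5$ the only normal subgroups of $S_n$ are $1$, $A_n$, $S_n$. By (\ref{B formula}) it is enough to decide, for each group $N$ of order $n!$, whether $\Hol(N)$ contains a regular subgroup isomorphic to $S_n$; the exact counts then follow from (\ref{B formula}) together with the known values (\ref{n count}) and (\ref{6 count}). So fix $n\ge 5$, let $N$ have order $n!$, and suppose $G\le\Hol(N)$ is regular with $G\cong S_n$; we must show $N$ is in the stated list. Write $\pi\colon\Hol(N)=\rho(N)\rtimes\Aut(N)\to\Aut(N)$ for the projection with kernel $\rho(N)$, and set $\widehat N=\lambda(N)\rho(N)$, a normal subgroup of $\Hol(N)$ with $\widehat N/\rho(N)\cong\Inn(N)$ and $\Hol(N)/\widehat N\cong\Out(N)$.

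Since $\rho(N)\trianglelefteq\Hol(N)$, the subgroup $G_0:=G\cap\rho(N)$ is normal in $G\cong S_n$, hence equals $1$, $A_n$, or $S_n$. If $G_0=S_n$ then $G=\rho(N)$ (both are regular of order $n!$), so $N\cong S_n$. If $G_0\cong A_n$, then $\rho^{-1}(G_0)$ is a normal subgroup of $N$ isomorphic to $A_n$ of index $2$. If $G_0=1$, then $\pi|_G\colon S_n\hookrightarrow\Aut(N)$ is faithful, and identifying $G$ with a regular complement to $\rho(N)$ in $\rho(N)\rtimes\pi(G)$ yields a bijective $1$-cocycle $f\colon S_n\to N$ for the action $\pi|_G$; intersecting $G$ instead with the intermediate normal subgroup $\widehat N$ and applying the same trichotomy shows that, still in this case, either $N\cong S_n$, or $N$ has a normal subgroup $\cong A_n$ of index $\le 2$, or $N/Z(N)\cong A_n$ with $|Z(N)|=2$, or $S_n\hookrightarrow\Out(N)$.

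Next one classifies the $N$ arising from the ``$A_n$-type'' outcomes. An extension $1\to A_n\to N\to C_2\to 1$ is, because $Z(A_n)=1$, determined up to isomorphism by the induced map $C_2\to\Out(A_n)$, and the Schur multiplier of $A_n$ has a unique element of order $2$; hence for $n\ne 6$ one gets $N\cong S_n$ or $N\cong A_n\times C_2$ (when $A_n$ sits in $N$ with index $2$) and $N\cong A_n\times C_2$ or $N\cong 2{\cdot}A_n$ (when $N/Z(N)\cong A_n$), while for $n=6$ the larger group $\Out(A_6)\cong C_2\times C_2$ additionally produces $N\cong M_{10}$ and $N\cong\mathrm{PGL}(2,9)$. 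The double cover $2{\cdot}A_n$ is then eliminated directly: it is perfect with centre of order $2$ and too small an outer automorphism group for any of the configurations of the previous paragraph to occur. All $N$ surviving this step lie in the list.

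The remaining case --- a bijective $1$-cocycle $f\colon S_n\to N$ with faithful action (in particular $S_n\hookrightarrow\Out(N)$) --- is the crux. The key leverage is that for any $\pi(S_n)$-invariant subgroup $M\le N$ the preimage $f^{-1}(M)$ is a subgroup of $S_n$ of index $[N:M]$, whereas for $n\ge 5$ every proper subgroup of $S_n$ has index $2$ or $\ge n$; hence every characteristic subgroup of $N$ has index $1$, $2$, or $\ge n$ in $N$. Combined with a stability-group argument --- $A_n$, being simple and non-nilpotent, acts faithfully on some chief factor $V$ of $N$, a characteristically simple group whose order divides $n!$ --- this lets one eliminate the possibilities for $V$: if $V$ is elementary abelian of exponent $p$ then the minimal faithful $\mathbb{F}_p$-dimension of $A_n$ is too large relative to $v_p(n!)$ for odd $p$, and $p=2$ is handled by a finer argument using that $S_n^{\mathrm{ab}}\cong C_2$ forces the odd part of a would-be bijective cocycle to be too small; if $V\cong T^k$ with $T$ nonabelian simple, order estimates force $k=1$ and $T\cong A_n$, so $N$ again has a normal subgroup $\cong A_n$ of index $2$ and we return to the classified situation. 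I expect this final chief-factor and representation-dimension bookkeeping, together with separately checking the small and exceptional values $n=5,6$ (where the arithmetic is loosest and $S_6$ has its exceptional outer automorphism), to be the main obstacle; everything before it is organisational.
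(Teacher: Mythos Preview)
Your organisational case-split on $G\cap\rho(N)$ and then on $G\cap\widehat N$ is sound and leads to the same list of candidate $N$ as the paper. The classification of extensions $1\to A_n\to N\to C_2\to 1$ via the map $C_2\to\Out(A_n)$, and of central extensions $1\to C_2\to N\to A_n\to 1$ via the Schur multiplier, is also correct. The genuine gap is your elimination of $2{\cdot}A_n$.

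You assert that $2{\cdot}A_n$ is ruled out because it has ``too small an outer automorphism group for any of the configurations of the previous paragraph to occur.'' But your own configuration~3 is precisely $N/Z(N)\cong A_n$ with $|Z(N)|=2$, which $2{\cdot}A_n$ satisfies; and for $n\neq 6$ one has $\Aut(2{\cdot}A_n)\cong S_n$ with $\Inn(2{\cdot}A_n)\cong A_n$, so a faithful $\ff\colon S_n\hookrightarrow\Aut(2{\cdot}A_n)$ with $\ff(A_n)=\Inn(2{\cdot}A_n)$ certainly exists. What must be shown is that no \emph{bijective} crossed homomorphism $\fg\colon S_n\to 2{\cdot}A_n$ accompanies such an $\ff$, and this is not a size obstruction at all. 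The paper calls this ``the hardest step'' and proves it by a delicate argument: one first shows (Lemma~\ref{lem3}) that the element $\zeta\in S_n$ with $\fg(\zeta)$ generating $Z(2{\cdot}A_n)$ must lie in $A_n$ (using that $A_n$ has no fixed-point-free automorphism), then (Lemma~\ref{lem4}) that $\fg$ carries $\Cent_{S_n}(\zeta)$ bijectively onto $\Cent_{2{\cdot}A_n}(\widetilde f(\zeta))$, and finally derives a contradiction from an explicit comparison of centraliser orders in $S_n$ versus $2{\cdot}A_n$ (Lemma~\ref{lem5}). None of this is visible from the outer-automorphism group alone.

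A secondary point: you identify your ``remaining case'' $S_n\hookrightarrow\Out(N)$ as the main obstacle and propose a chief-factor/representation-dimension analysis for it. In the paper this case never needs separate treatment: once $N$ is shown not to be perfect (which is exactly the $2{\cdot}A_n$ elimination above), one takes a characteristic $M\supset[N,N]$, observes $\fg^{-1}(M)=A_n$, restricts to obtain a regular $A_n$-subgroup in $\Hol(M)$, and invokes Byott's theorem on simple groups \cite{Byott simple} to conclude $M\cong A_n$. So the difficulty is located entirely in the $2{\cdot}A_n$ step, not where you expected it.
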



\section{Hopf-Galois structures on a Galois $S_n$-extension for $n\geq 5$}

In this section, assume that $n\geq 5$, in which case $A_n$ is non-abelian simple, and let $N$ be a group of order $n!$. In  \cite[Theorem 1.7]{Tsang HG}, the author gave some necessary conditions on $N$ in order for $\#\E(S_n,N)$ to be non-zero. Below, we shall refine the arguments there to prove Theorem~\ref{thm56}.

\vspace{1.5mm}

As already noted in \cite[Proposition 2.1]{Tsang HG}, which is a consequence of (\ref{Hol(N)}), a subgroup of $\Hol(N)$ isomorphic to $S_n$ is of the shape 
\begin{equation}\label{subgroup fg}\{\rho(\fg(\sigma))\cdot\ff(\sigma):\sigma\in S_n\},\mbox{ where }\begin{cases}\ff\in\Hom(S_n,\Aut(N))\\\fg\in\mbox{Map}(S_n,N)\end{cases}\end{equation}
satisfy the relation
\begin{equation}\label{fg relations} \fg(\sigma\tau) = \fg(\sigma)\cdot\ff(\sigma)(\fg(\tau))\mbox{ for all }\sigma,\tau\in S_n. \end{equation}
Moreover, the subgroup (\ref{subgroup fg}) is regular precisely when $\fg$ is bijective. Hence, by (\ref{B formula}), if $\#\E(S_n,N)$ is non-zero, then we may choose $\fg$ to be bijective. In view of (\ref{n count}) and (\ref{6 count}), to prove Theorem~\ref{thm56}, it then suffices to show that
\begin{equation}\label{thm'}
\mbox{$\fg$ is bijective}\implies N\simeq \begin{cases}S_n \mbox{ or }A_n\times C_2 &\mbox{if $n\neq 6$}, \\ S_n \mbox{ or } A_n\times C_2 \mbox{ or }M_{10}\mbox{ or }\mathrm{PGL}(2,9)&\mbox{if $n=6$}.\end{cases}
\end{equation}
To that end, given any group $\Gamma$, we shall use the following notation:
\begin{align*}
\Inn(\Gamma) & = \mbox{the inner automorphism group of $\Gamma$},\\
\Out(\Gamma) & = \mbox{the outer automorphism group of $\Gamma$},\\
Z(\Gamma) & = \mbox{the center of $\Gamma$},\\
[\Gamma,\Gamma] & = \mbox{the commutator subgroup of $\Gamma$}.
\end{align*}
Recall that $\Gamma$ is said to be \emph{perfect} if $\Gamma = [\Gamma,\Gamma]$. The implication (\ref{thm'}), and in particular Theorem~\ref{thm56}, is a direct consequence of the propositions below.

\begin{prop}\label{prop1}Suppose that $\fg$ is bijective. Then, we have:
\begin{enumerate}[(a)]
\item The group $N$ cannot be perfect.
\item The group $N$ contains a copy of $A_n$.
\end{enumerate}
\end{prop}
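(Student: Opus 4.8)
The plan is to work inside $\Hol(N)=\rho(N)\rtimes\Aut(N)$, using that a bijective $\fg$ produces the regular subgroup $\mathcal{G}=\{\rho(\fg(\sigma))\ff(\sigma):\sigma\in S_n\}\cong S_n$, and that $\ff$ is recovered as the composite of $\mathcal{G}\hookrightarrow\Hol(N)$ with the projection $\Hol(N)\to\Aut(N)$. Its kernel is a normal subgroup of $S_n$, hence $1$, $A_n$ or $S_n$, and $\mathcal{G}\cap\rho(N)=\{\rho(\fg(\sigma)):\sigma\in\ker\ff\}$ is, via $\rho^{-1}$, a subgroup of $N$ isomorphic to $\ker\ff$. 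I would first dispose of the two larger cases. If $\ker\ff=S_n$, then $\ff$ is trivial and $\fg$ is a bijective homomorphism, so $N\simeq S_n$. If $\ker\ff=A_n$, then (\ref{fg relations}) shows $\fg|_{A_n}$ is a homomorphism, necessarily injective, so $N$ contains a copy of $A_n$, which is normal of index $2$. In both cases $N$ contains $A_n$ and $[N,N]\subsetneq N$, so $N$ is not perfect; thus (a) and (b) hold.

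The substantial case is $\ker\ff=1$, where $\ff$ embeds $S_n$, hence $A_n$, into $\Aut(N)$. As $\ff(A_n)$ is non-abelian simple and $\Inn(N)\trianglelefteq\Aut(N)$, the group $\ff(A_n)\cap\Inn(N)$ is $\ff(A_n)$ or $1$, and $\ff(S_n)\cap\Inn(N)$ is $1$, $\ff(A_n)$ or $\ff(S_n)$. If $\ff(S_n)\le\Inn(N)$, then $S_n$ embeds in $\Inn(N)\simeq N/Z(N)$, and as $|N/Z(N)|\le n!$ this forces $Z(N)=1$ and $N\simeq S_n$. If $\ff(S_n)\cap\Inn(N)=\ff(A_n)$, then $A_n$ embeds in $N/Z(N)$; since $|A_n|=n!/2$, either $Z(N)=1$, so $N$ is a centreless extension of $A_n$ by $C_2$ (hence $N\simeq S_n$ when $n\neq6$, and $N$ one of $S_6$, $\mathrm{PGL}(2,9)$, $M_{10}$ when $n=6$, as these are the index-two subgroups of $\Aut(A_6)$ containing $\Inn(A_6)$), or $|Z(N)|=2$ and $N$ is a central extension of $A_n$ by $C_2$, i.e.\ $N\simeq A_n\times C_2$ or $N\simeq2.A_n$. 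In every one of these outcomes except $N\simeq2.A_n$, the group $N$ contains $A_n$ and has $[N,N]\subsetneq N$, giving (a) and (b). Finally, if $\ff(S_n)\cap\Inn(N)=1$, then $\ff$ maps $S_n$ isomorphically into $\Out(N)$.

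Thus the crux, and the expected main obstacle, is to rule out that $N$ is perfect (equivalently $N\simeq2.A_n$, which is perfect and, having trivial abelianisation, has no subgroup of index $2$, hence no copy of $A_n$) and that $\Out(N)$ contains $S_n$. For this I would use not only the inclusion $\ff:S_n\hookrightarrow\Aut(N)$ but crucially the bijective cocycle $\fg$: by (\ref{fg relations}) it makes $N$ the additive group of a skew brace whose multiplicative group is $S_n$, so in the case $\Out(N)\supseteq S_n$ (where, e.g., $N$ could be abelian) one has such a brace with a faithful $\lambda$-action $S_n\hookrightarrow\Aut(N)$. I would then pass to the generalised Fitting subgroup $F^*(N)=F(N)E(N)$, which is characteristic with $C_N(F^*(N))\le F^*(N)$; an inner-free action of $A_n$ on $N$ forces a nontrivial $A_n$-action on $F^*(N)$, which one examines on the nilpotent part $F(N)$ (whose Frattini sections have general-linear automorphism groups) and on the layer $E(N)$ (a central product of quasisimple groups, with solvable outer automorphism group by Schreier's conjecture). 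Using moreover that $n!$ is not a nontrivial proper power --- by Bertrand's postulate there is a prime $p$ with $n/2<p\le n$, whence $v_p(n!)=1$ --- one should be driven to find a copy of $A_n$ already inside $N$, contradicting the exceptional branch; a parallel argument (the socle of a perfect such $N$ being a single copy of a simple group, necessarily $A_n$, after which $N$ is a small extension of $A_n$ handled directly) excludes perfect $N$. Alternatively this last step may be extracted from the necessary conditions already established in \cite[Theorem 1.7]{Tsang HG}, which the present argument refines.
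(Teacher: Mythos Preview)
Your case analysis on $\ker\ff$ and then on $\ff(S_n)\cap\Inn(N)$ is sound and correctly isolates the two residual obstacles: the possibility $N\simeq 2.A_n$ (arising when $\ff(A_n)\le\Inn(N)$ and $|Z(N)|=2$) and the possibility that $A_n$ embeds into $\Out(N)$. However, neither is actually disposed of. Your parenthetical claim that for perfect $N$ ``the socle \dots\ [is] a single copy of a simple group, necessarily $A_n$'' is false for $2.A_n$: its unique minimal normal subgroup is the centre $C_2$, so $\mathrm{soc}(2.A_n)=C_2$, and no socle-type argument produces a copy of $A_n$ inside $2.A_n$ (indeed $2.A_n$ has no subgroup of index $2$). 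Your fallback to \cite[Theorem~1.7]{Tsang HG} does not help either: the present paper is explicitly \emph{refining} that result, and part~(a) here is precisely the new content---the earlier necessary conditions do not exclude $2.A_n$. Likewise, your $F^*(N)$ sketch for the $\Out(N)$ branch is not carried to a conclusion; it is far from clear that inspecting Frattini sections of $F(N)$ and the layer $E(N)$, together with $v_p(n!)=1$ for a Bertrand prime, forces an embedded $A_n$ without substantial further work.

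The paper proceeds quite differently. It first proves (a) by contradiction: assuming $N$ perfect, it shows (via a maximal characteristic subgroup $M$, the structure $N/M\simeq T^m$, and Schreier's conjecture applied to $\Out(T)$) that necessarily $N\simeq 2.A_n$ with $\ff(A_n)\le\Inn(N)$. The decisive step---which has no analogue in your proposal---is a centralizer count that genuinely uses the bijectivity of $\fg$: letting $\zeta\in S_n$ be the element with $\fg(\zeta)$ generating $Z(N)$, one shows $\zeta\in A_n$ (via a fixed-point-free automorphism argument on $A_n$), that $\zeta$ has order $2$, and that $\sigma$ commutes with $\zeta$ in $S_n$ iff $\fg(\sigma)$ commutes with a lift $\widetilde f(\zeta)$ in $N$. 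This forces $\#\Cent_{S_n}(\zeta)=\#\Cent_{2A_n}(\widetilde\zeta)$, contradicting an explicit computation showing $\#\Cent_{2A_n}(\widetilde\zeta)=\tfrac12\#\Cent_{S_n}(\zeta)$. Part~(b) then follows cheaply from (a): a maximal characteristic $M\supseteq[N,N]$ exists, $\fg^{-1}(M)=A_n$, and the restriction of $(\ff,\fg)$ yields a regular copy of $A_n$ in $\Hol(M)$, whence $M\simeq A_n$ by Byott's theorem for simple groups. The centralizer argument is the essential missing idea in your attempt.
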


\begin{prop}\label{prop2}Suppose that $N$ contains a copy of $A_n$. Then, we have:
\[ N\simeq \begin{cases}
S_n\mbox{ or }A_n\times C_2 &\mbox{if $n\neq6$},\\
S_6\mbox{ or }A_6\times C_2\mbox{ or }M_{10}\mbox{ or }\mathrm{PGL}(2,9)&\mbox{if $n=6$}.
\end{cases}\]
\end{prop}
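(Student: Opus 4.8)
The plan is to observe first that any copy of $A_n$ inside $N$ is forced to have index $2$, since $|A_n| = n!/2 = |N|/2$, and an index-$2$ subgroup is automatically normal. So I would fix a normal subgroup $A \trianglelefteq N$ with $A \simeq A_n$ and $N/A \simeq C_2$, and examine the conjugation homomorphism $c \colon N \to \Aut(A)$. Let $C = \ker c = \Cent_N(A)$. Because $n \geq 5$, the group $A \simeq A_n$ is nonabelian simple, hence centerless, so $A \cap C = Z(A) = 1$; therefore $C$ embeds into $N/A \simeq C_2$, and $|C| \in \{1, 2\}$. The proof then divides into these two cases.

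If $|C| = 2$, write $C = \langle z \rangle$. Since $A \cap C = 1$ we have $z \notin A$, so $A\langle z \rangle$ already has order $2|A| = n! = |N|$, forcing $N = A\langle z \rangle$; and as $z$ centralizes $A$ with $\langle z \rangle \cap A = 1$, this is an internal direct product, giving $N \simeq A_n \times C_2$. If $|C| = 1$, then $c$ is injective, so $N$ is isomorphic to a subgroup of $\Aut(A_n)$ of order $n!$ that contains $c(A) = \Inn(A_n) \simeq A_n$. For $n \neq 6$ one has $\Aut(A_n) = S_n$, which already has order $n!$, so $N \simeq S_n$. For $n = 6$, $\Aut(A_6)$ has order $1440$ with $\Out(A_6) \simeq C_2 \times C_2$; by the correspondence theorem the index-$2$ subgroups of $\Aut(A_6)$ containing $\Inn(A_6)$ are in bijection with the three order-$2$ subgroups of $\Out(A_6)$, and these three overgroups of $\Inn(A_6) \simeq A_6$ are exactly $S_6$, $\mathrm{PGL}(2,9)$ and $M_{10}$. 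Together with the $|C| = 2$ case this yields the claimed list $\{S_6, A_6 \times C_2, M_{10}, \mathrm{PGL}(2,9)\}$.

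The routine parts are the ``index $2$ implies normal'' step, the centerless trick bounding $|\Cent_N(A)|$, and recognizing the internal direct product. The one place where I would lean on an external fact is the $n = 6$ case: I need that $|\Out(A_6)| = 4$ is a Klein four-group and that the three proper overgroups of $\Inn(A_6)$ inside $\Aut(A_6)$ are precisely the named groups $S_6$, $\mathrm{PGL}(2,9)$, $M_{10}$ --- this is the well-known exceptional behaviour of $A_6$, for which I would cite a standard reference such as the ATLAS of finite groups or Dixon--Mortimer. So the main obstacle, such as it is, is bookkeeping around $A_6$ rather than any genuine difficulty.
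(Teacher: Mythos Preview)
Your argument is correct and in fact cleaner than the paper's. The paper proceeds differently: for $n\neq 6$ it first shows (Lemma~\ref{lem split}) that the extension $1\to A_n\to N\to C_2\to 1$ splits, by picking $\eta_0\in N\setminus A$ and adjusting by an element of $A$ so that conjugation by $\eta_0 a$ has order at most two in $\Aut(A)\simeq S_n$, hence $(\eta_0 a)^2\in Z(A)=1$; it then classifies the resulting semidirect products $A_n\rtimes_\psi C_2$ explicitly (Lemma~\ref{lem semi}), obtaining $S_n$ or $A_n\times C_2$ according to whether $\psi(\ep)$ is inner or not. For $n=6$ the paper simply runs a \textsc{Magma} search over all groups of order $720$ containing a normal copy of $A_6$.

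Your route via the centralizer $C=\Cent_N(A)$ bypasses the splitting argument entirely: the dichotomy $|C|\in\{1,2\}$ immediately yields either $N\hookrightarrow\Aut(A_n)$ or $N\simeq A_n\times C_2$, and the $n=6$ case is handled uniformly by invoking the known structure of $\Aut(A_6)=\mathrm{P}\Gamma\mathrm{L}(2,9)$ and its three index-$2$ subgroups, rather than by machine computation. What the paper's approach buys is that it is entirely self-contained for $n\neq 6$ (no external classification of subgroups of $\Aut(A_n)$ beyond $\Aut(A_n)\simeq S_n$), whereas your $n=6$ case leans on the ATLAS identification of the three almost-simple groups $A_6.2_i$; conversely, your argument is shorter, more conceptual, and avoids the computer verification.
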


Hence, it remains to prove Propositions~\ref{prop1} and~\ref{prop2}, which we shall do in the subsequent subsections.

\subsection{Ruling out the groups $N$ which are perfect} In what follows, suppose that $\fg$ is bijective, and we shall prove Proposition~\ref{prop1} (a). Let us remark that this is the hardest step to the proof of Theorem~\ref{thm56}.

\vspace{1.5mm}

Suppose for contradiction that $N$ is perfect. Note that then $N$ cannot have a subgroup of index two, because such a subgroup is normal, and its quotient is abelian. Also, it shall be helpful to recall that $A_n$ is the unique non-trivial proper normal subgroup of $S_n$, since $n\geq 5$.

\begin{lem}\label{lem1}The homomorphism $\ff$ is injective.
\end{lem}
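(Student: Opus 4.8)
We need to show that if $\fg$ is bijective and $N$ is perfect, then $\ff\colon S_n \to \Aut(N)$ is injective. The plan is to analyze the kernel $\ker(\ff)$, which is a normal subgroup of $S_n$; since $n \geq 5$, it is one of $\{1\}$, $A_n$, or $S_n$. We want to rule out the last two cases.

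\emph{Ruling out $\ker(\ff) = S_n$.} If $\ff$ were trivial, then the cocycle relation (\ref{fg relations}) would read $\fg(\sigma\tau) = \fg(\sigma)\cdot\fg(\tau)$ for all $\sigma,\tau\in S_n$, i.e., $\fg$ would be a group homomorphism $S_n \to N$. Being bijective, it would be an isomorphism $S_n \cong N$. But $S_n$ is not perfect (it has $A_n$ as an index-two subgroup), contradicting the assumption that $N$ is perfect.

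\emph{Ruling out $\ker(\ff) = A_n$.} Here $\ff$ factors through $S_n/A_n \cong C_2$, so the image $\ff(S_n)$ is a subgroup of $\Aut(N)$ of order at most $2$; write $\ff(S_n) = \{1,\alpha\}$ where $\alpha = \ff(\tau)$ for any transposition $\tau$, and $\alpha^2 = 1$. The cocycle relation then shows that $\fg$ restricted to $A_n$ is a homomorphism into $N$, hence $\fg(A_n)$ is a subgroup of $N$ of index $[S_n:A_n] = 2$ (using bijectivity of $\fg$ to count: $|\fg(A_n)| = |A_n| = n!/2$). But a perfect group has no subgroup of index two, since such a subgroup would be normal with abelian (indeed order-two) quotient. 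This is the contradiction. One should double-check that $\fg(A_n)$ is genuinely a subgroup: for $\sigma,\tau \in A_n$ we have $\ff(\sigma) = \mathrm{id}$, so $\fg(\sigma\tau) = \fg(\sigma)\fg(\tau)$, confirming closure, and bijectivity of $\fg$ on $S_n$ forces injectivity on $A_n$, so $|\fg(A_n)| = n!/2$ exactly.

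\emph{Main obstacle.} Honestly, for this particular lemma there is no serious obstacle — it is a short bookkeeping argument built on two standard facts: the normal subgroup structure of $S_n$ for $n\geq 5$, and the observation that a perfect group has no index-two subgroup. The one subtlety worth stating carefully is that $\fg$ need not be a homomorphism on all of $S_n$, only on the subgroup where $\ff$ is trivial; the cocycle identity (\ref{fg relations}) is precisely what controls this, and it is the restriction of $\fg$ to $\ker(\ff)$ that yields an honest subgroup of $N$. I expect the real difficulty in the paper lies not here but in the subsequent steps of Proposition~\ref{prop1}(a), where one must leverage the non-existence of this kind of structure more globally; this lemma is a warm-up that pins down $\ff$ as an embedding of $S_n$ into $\Aut(N)$, to be exploited afterward.
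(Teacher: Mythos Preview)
Your proof is correct and follows the same approach as the paper: the cocycle relation (\ref{fg relations}) makes $\fg$ a homomorphism on $\ker(\ff)$, and if $\ker(\ff)\supset A_n$ this forces an index-two subgroup in the perfect group $N$. The paper simply handles your two cases in one stroke by observing that $\ker(\ff)$ cannot contain $A_n$, rather than treating $\ker(\ff)=A_n$ and $\ker(\ff)=S_n$ separately.
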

\begin{proof}Notice that $\fg$ restricts to a homomorphism $\ker(\ff)\longrightarrow N$ by (\ref{fg relations}). This implies that $\ker(\ff)$ cannot contain $A_n$, since $N$ has no subgroup of index two, and hence $\ker(\ff)$ must be trivial.
\end{proof}

Recall that the double cover of $A_n$ is the unique group $2A_n$, up to isomorphism, fitting into a short exact sequence
\[\begin{tikzcd}[column sep = 1cm] 1\arrow{r}& C_2 \arrow{r}& 2A_n \arrow{r} & A_n \arrow{r}& 1\end{tikzcd}\]
such that the image of $C_2$ in $2A_n$ lies in $Z(2A_n)$ and $[2A_n,2A_n]$. It is known that $2A_n$ is a perfect group whose center has order two.

\begin{lem}\label{lem2}We have $N\simeq 2A_n$ and $\ff(A_n) \subset\Inn(N)$. 
\end{lem}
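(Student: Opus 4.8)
The plan is to identify $N$ with the double cover $2A_n$ and then read off the statement about $\ff$; the single fact doing all the real work is that $\ff(A_n)$ lands inside $\Inn(N)$, and everything else is a short order computation. As a preliminary observation: applying (\ref{fg relations}) with $\sigma=\tau=1$ gives $\fg(1)=1$, so $\fg(A_n)$ is a subset of $N$ of size $|A_n|=n!/2=|N|/2$. Since $N$ is perfect it has no subgroup of index two, so no proper subgroup of $N$ can contain $\fg(A_n)$; hence $N=\langle\fg(A_n)\rangle$. This will be used freely.

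Granting $\ff(A_n)\subseteq\Inn(N)$, here is how $N\simeq 2A_n$ follows. By Lemma~\ref{lem1} the map $\ff$ is injective, so we obtain an embedding of $A_n$ into $\Inn(N)\simeq N/Z(N)$. Comparing orders, $|N/Z(N)|=n!/|Z(N)|$ is divisible by $|A_n|=n!/2$, so $|Z(N)|\in\{1,2\}$. If $|Z(N)|=1$, then $A_n$ would be a subgroup of index two in $N$, hence normal with abelian quotient, contradicting that $N$ is perfect. Therefore $|Z(N)|=2$, and then $|N/Z(N)|=n!/2=|A_n|$ forces $N/Z(N)\simeq A_n$. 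Thus $N$ is a perfect group sitting in a central extension $1\to C_2\to N\to A_n\to 1$ with the copy of $C_2$ equal to $Z(N)$, which lies in $Z(N)$ and, since $N=[N,N]$, also in $[N,N]$; by uniqueness of the double cover, $N\simeq 2A_n$. Note that the assertion $\ff(A_n)\subseteq\Inn(N)$ is itself the second claim of the lemma, so it only remains to prove it.

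The crux, and what I expect to be the main obstacle, is to show $\ff(A_n)\subseteq\Inn(N)$. Suppose not. Since $A_n$ is non-abelian simple and $\ff$ is injective, $\ff(A_n)\simeq A_n$ is simple, so $\ff(A_n)\cap\Inn(N)=1$; composing $\ff$ with the projection $\Aut(N)\to\Out(N)$ then has kernel a normal subgroup of $S_n$ meeting $A_n$ trivially, hence (as $n\geq 5$) the trivial subgroup, so $S_n$ embeds into $\Out(N)$. I would rule this out using that the outer automorphism group of a finite perfect group is small relative to its order: via the generalized Fitting subgroup of $N$ and the classification of finite simple groups one gets $|\Out(N)|<|N|=n!$ — for instance $N$ has at most $\log_{60}(n!)<n$ components, so no copy of $S_n$ can arise from permuting them, and the remaining contributions to $\Out(N)$ are correspondingly bounded — which is incompatible with $S_n\hookrightarrow\Out(N)$. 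If needed, the additional rigidity coming from the bijectivity of $\fg$ can be brought in to tighten the estimate, namely that $\Hol(N)$ contains the regular subgroup $G'=\{\rho(\fg(\sigma))\ff(\sigma):\sigma\in S_n\}\simeq S_n$ with $G'\cap\rho(N)=1$ and $G'\cap\Aut(N)=1$, so that $G'$ simultaneously complements $\rho(N)$ and $\Aut(N)$ in $\Hol(N)$; alternatively one may invoke the structural conditions on $N$ from \cite[Theorem 1.7]{Tsang HG}. A posteriori this is unsurprising: once $N\simeq 2A_n$, the group $\Out(2A_n)$ is far too small to contain a copy of $A_n$, so $\ff(A_n)$ had to lie in $\Inn(N)$ all along.
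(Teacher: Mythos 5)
The easy half of your argument is fine: granting $\ff(A_n)\subseteq\Inn(N)$, your deduction that $|Z(N)|=2$, that $N/Z(N)\simeq A_n$, and that $N\simeq 2A_n$ by uniqueness of the double cover is correct, and it is a tidy reversal of the paper's order (the paper pins down $N\simeq 2A_n$ first and only then gets $\ff(A_n)\subseteq\Inn(N)$, since once $M=Z(N)$ one has $\Out(N)\hookrightarrow\Out(A_n)$ abelian). But the crux, which you yourself flag, is not proved. Your reduction to ``$S_n$ cannot embed in $\Out(N)$'' is legitimate, but the way you rule this out rests on two assertions that are false or unjustified. First, $|\Out(N)|<|N|$ is not a theorem about perfect groups: for example, if $V$ is a direct sum of $m$ copies of the natural module for $A_5\simeq \mathrm{SL}_2(4)$, then $V\rtimes A_5$ is perfect and its outer automorphism group contains $\mathrm{GL}_m(\mathbb{F}_4)$, which dwarfs $|V\rtimes A_5|$ for large $m$; large contributions to $\Out$ can come from the action on the soluble radical, not only from permuting components, so ``the remaining contributions are correspondingly bounded'' has no argument behind it. Second, even your component count is wrong asymptotically: $\log_{60}(n!)<n$ holds only while $n!<60^n$, which fails once $n$ is around $163$, so for large $n$ you cannot exclude $n$ or more components on order grounds alone. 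Whether a perfect group of order exactly $n!$ can have $S_n$ inside $\Out(N)$ is precisely the kind of question that needs a real structural argument, and you have not supplied one.

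Note also that your sketch never actually uses the bijectivity of $\fg$ (you mention it only as an optional ``tightening''), whereas in the paper it is essential. The paper takes a maximal characteristic subgroup $M$ of $N$, so $N/M\simeq T^m$ with $T$ non-abelian simple, shows via \cite[Lemma 4.8 and Proposition 3.4]{Tsang HG} (CFSG input) that the induced map sends $A_n$ into $\Inn(T)^m$, and then uses the relation (\ref{fg relations}) together with bijectivity of $\fg$ to see that $\fg^{-1}(M)$ is a subgroup of $S_n$ of index $[N:M]$; this forces $A_n\not\subset\fg^{-1}(M)$, hence an embedding $A_n\hookrightarrow T^m$, hence $|M|=2$, $m=1$, $N/M\simeq A_n$ and $N\simeq 2A_n$. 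Without some such use of $\fg$, you are trying to prove a purely group-theoretic statement about all perfect groups of order $n!$ that is much stronger than what the paper needs, and your outline does not establish it.
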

\begin{proof}Let $M$ be any proper and maximal characteristic subgroup of $N$. The quotient group $N/M$ is then characteristically simple, and so we have
\[ N/M \simeq T^m,\mbox{ where $T$ is a simple group and $m\in\mathbb{N}$.}\]
Since $N$ is perfect, necessarily $T$ is non-abelian. Also, it is known that
\[ \Aut(T^m) \simeq \Aut(T)^m\rtimes S_m,\]
by \cite[Lemma 3.2]{Byott simple}, for example. Note that there is a natural homomorphism
\begin{equation}\label{Aut mod M}\Aut(N)\longrightarrow\Aut(N/M);\hspace{1em}\varphi\mapsto(\eta M\mapsto\varphi(\eta)M)\end{equation}
because $M$ is characteristic. We then have a homomorphism
\[\begin{tikzcd}[column sep = 1cm]\overline{\ff}: S_n \arrow{r} & \Aut(N) \arrow{r} &\Aut(N/M) \arrow{r}{\simeq} & \Aut(T)^m\rtimes S_m\end{tikzcd}\]
induced by $\ff$. Replacing $2A_n$ by $A_n$ in the proof of \cite[Lemma 4.8]{Tsang HG},  the exact same argument shows that the homomorphism
\[\begin{tikzcd}[column sep = 1.5cm]
A_n \arrow{r}{\overline{\ff}}& \Aut(T)^m\rtimes S_m \arrow{r}{\mbox{\tiny projection}} & S_m
\end{tikzcd}\]
must be trivial, and in turn the homomorphism
\[\begin{tikzcd}[column sep = 1.5cm]
A_n \arrow{r}{\overline{\ff}}& \Aut(T)^m \arrow{r}{\mbox{\tiny quotient}} &\Out(T)^m
\end{tikzcd}\]
must be trivial as well. Hence, we conclude that $\overline{\ff}(A_n)$ lies in $\Inn(T)^m$. Let us remark that the argument in \cite[Lemma 4.8]{Tsang HG} uses \cite[Proposition 3.4]{Tsang HG}, which is a consequence of the classification of finite simple groups.

\vspace{1.5mm}

Next, consider the surjective map
\[\begin{tikzcd}[column sep = 1cm]\overline{\fg} : S_n\arrow{r}& N \arrow{r}& N/M \arrow{r}{\simeq}& T^m\end{tikzcd}\]
induced by $\fg$. As shown in \cite[Lemma 4.1]{Tsang HG}, the relation (\ref{fg relations}) implies that:
\begin{itemize}
\item $\overline{\fg}$ restricts to a homomorphism $\ker(\overline{\ff})\longrightarrow T^m$,
\item $\fg^{-1}(M)$ is a subgroup of $S_n$ of index $[N:M]$.
\end{itemize}
The latter in turn implies that $A_n\not\subset\fg^{-1}(M)$, for otherwise
\[ 2 = [S_n :A_n] \geq [S_n:\fg^{-1}(M)] = [N:M] = |T|^m,\]
which is impossible because $T$ is non-abelian. We consider two possibilities:
\begin{enumerate}[(1)]
\item If $\ker(\overline{\ff})\supset A_n$, then $\overline{\fg}$ restricts to a homomorphism $A_n\longrightarrow T^m$, which cannot be trivial because $A_n\not\subset \fg^{-1}(M)$, and so must be an embedding.
\item If $\ker(\overline{\ff})=1$, the $\overline{\ff}$ restricts to an embedding $A_n\longrightarrow \Inn(T)^m\simeq T^m$.
\end{enumerate}
In both cases, we have $ |A_n| \leq |T|^m$ and hence $|M|\leq2$. Necessarily $|M|=2$, for otherwise $N\simeq T^m$, and $A_n$ would embed into $N$ as a subgroup of index two. It follows that $|A_n| = |T|^m$, whence
\[ A_n\simeq T^m,\mbox{ so in fact $m=1$, and we have }N/M\simeq T\simeq A_n.\]
Since any normal subgroup of order two lies in the center, by the uniqueness of the double cover of $A_n$, we see that $N\simeq 2A_n$ and $M = Z(N)$.

\vspace{1.5mm}

Since $M = Z(N)$, the map (\ref{Aut mod M}) is injective, by \cite[Proposition 3.5 (c)]{Tsang HG}, for example. It follows that $\Out(N)$ embeds into $\Out(N/M)\simeq \Out(A_n)$ and so is abelian. Thus, the homomorphism
\[\begin{tikzcd}[column sep = 1.5cm]
A_n \arrow{r}{\ff}& \Aut(N) \arrow{r}{\mbox{\tiny quotient}} & \Out(N)
\end{tikzcd}\]
must be trivial, whence $\ff(A_n)$ lies in $\Inn(N)$.
\end{proof}

From Lemmas~\ref{lem1} and~\ref{lem2}, we deduce that $\ff$ induces an isomorphism
\[f\in\Hom(A_n,N/Z(N));\hspace{1em}f(\sigma) = \widetilde{f}(\sigma)Z(N),\]
where $\widetilde{f}:A_n\longrightarrow N$ is any injective map satisfying
\[\ff(\sigma)(\eta) = \widetilde{f}(\sigma)\cdot\eta\cdot\widetilde{f}(\sigma)^{-1}\mbox{ for all }\sigma\in A_n\mbox{ and }\eta\in N.\]
Also, let $\zeta\in S_n$ be the element such that $\fg(\zeta)$ is the generator of $Z(N)$.

\begin{lem}\label{lem3}We have $\zeta \in A_n$.
\end{lem}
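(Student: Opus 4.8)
The plan is to argue by contradiction: assuming $\zeta\notin A_n$, I will exhibit a subgroup of $N$ isomorphic to $A_n$ of index two, which is impossible since $N\simeq 2A_n$ is perfect.

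First I would unwind (\ref{fg relations}) at $\zeta$. As $Z(N)$ is characteristic of order two, every automorphism of $N$ fixes the generator $z=\fg(\zeta)$, so $\fg(\sigma\zeta)=\fg(\sigma)\cdot\ff(\sigma)(z)=\fg(\sigma)z$ for all $\sigma\in S_n$; in particular $z=\fg(\zeta)\neq\fg(1)=1$, so $\zeta\neq1$. Assuming $\zeta\notin A_n$, we have $A_n\zeta=S_n\setminus A_n$, and applying the bijection $\fg$ gives $z\,\fg(A_n)=\fg(A_n\zeta)=N\setminus\fg(A_n)$. Thus $\fg(A_n)$ and its translate $z\,\fg(A_n)$ are disjoint and exhaust $N$, so each coset $\{w,zw\}$ of $Z(N)$ contains exactly one element of $\fg(A_n)$; equivalently, the reduction $\overline{\fg}\colon A_n\to N/Z(N)$, $\sigma\mapsto\fg(\sigma)Z(N)$, is a bijection. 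Reducing (\ref{fg relations}) modulo $Z(N)$ shows moreover that $\overline{\fg}$ is a cocycle for the homomorphism $\overline{\ff}\colon A_n\to\Aut(N/Z(N))$ induced by $\ff$.

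Next I would run a rigidity argument on $N/Z(N)$. Since $\Aut(N)\to\Aut(N/Z(N))$ is injective (as in the proof of Lemma~\ref{lem2}) and $\ff$ is injective, $\overline{\ff}$ is injective; and since $\ff(\sigma)$ is conjugation by $\widetilde f(\sigma)$ on $N$ for $\sigma\in A_n$, the automorphism $\overline{\ff}(\sigma)$ is conjugation by $f(\sigma)$ on $N/Z(N)$, so $\overline{\ff}$ carries $A_n$ isomorphically onto $\Inn(N/Z(N))$. By the correspondence recalled around (\ref{subgroup fg}) (cf.\ \cite[Proposition 2.1]{Tsang HG}), applied with $N/Z(N)$ in place of $N$, the pair $(\overline{\ff},\overline{\fg})$ yields a regular subgroup of $\Hol(N/Z(N))$ isomorphic to $A_n$. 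But $N/Z(N)\simeq A_n$ is non-abelian simple as $n\geq5$, so by \cite{Byott simple} (via (\ref{B formula})) the only regular subgroups of $\Hol(N/Z(N))$ are $\lambda(N/Z(N))$ and $\rho(N/Z(N))$; since every element of $\rho(N/Z(N))$ has trivial $\Aut(N/Z(N))$-component while those of our subgroup have $\Aut(N/Z(N))$-component $\overline{\ff}(\sigma)$ and $\overline{\ff}$ is non-trivial, our subgroup must be $\lambda(N/Z(N))=\{\rho(m^{-1})\cdot(\mbox{conjugation by }m):m\in N/Z(N)\}$. Matching $\rho$- and $\Aut$-components (and using that $N/Z(N)$ is centerless to match the automorphism parts) forces $\fg(\sigma)Z(N)=f(\sigma)^{-1}$ for every $\sigma\in A_n$.

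Finally I would conclude. The identity just obtained says that $\fg(\sigma)^{-1}$ lies in the coset $f(\sigma)$, so we may choose the lift $\widetilde f$ with $\widetilde f(\sigma)=\fg(\sigma)^{-1}$ for all $\sigma\in A_n$; then $\ff(\sigma)(\eta)=\fg(\sigma)^{-1}\eta\,\fg(\sigma)$, and for $\sigma,\tau\in A_n$ the relation (\ref{fg relations}) collapses to $\fg(\sigma\tau)=\fg(\sigma)\cdot\fg(\sigma)^{-1}\fg(\tau)\fg(\sigma)=\fg(\tau)\fg(\sigma)$. Hence $\sigma\mapsto\fg(\sigma)^{-1}$ is a homomorphism $A_n\to N$, injective because $\fg$ is, so $N$ has a subgroup isomorphic to $A_n$ of index two, contradicting that $N$ is perfect. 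Therefore $\zeta\in A_n$. The step I expect to be delicate is the rigidity argument: one must check carefully that $\overline{\fg}|_{A_n}$ is genuinely bijective and then apply \cite{Byott simple} to pin down $\fg(\sigma)Z(N)$ on the nose (not merely up to an automorphism of $N/Z(N)$); the first and last parts are routine manipulation of (\ref{fg relations}).
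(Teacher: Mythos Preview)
Your proof is correct, and it takes a genuinely different route from the paper's. The paper defines the homomorphism $g\colon A_n\to N/Z(N)$, $g(\sigma)=\fg(\sigma)\widetilde f(\sigma)Z(N)$; it first shows $g$ is nontrivial (the ``trivial'' case yields exactly your final index-two contradiction), hence an isomorphism by simplicity of $A_n$, and then argues that if $\zeta\notin A_n$ the automorphism $\varphi=f^{-1}\circ g$ of $A_n$ would be fixed-point free, which is impossible for $n\ge5$. You instead assume $\zeta\notin A_n$ from the outset, use this to see that $\overline{\fg}|_{A_n}$ is bijective, and then invoke Byott's classification of regular subgroups of $\Hol(A_n)$ to pin down $\overline{\fg}(\sigma)=f(\sigma)^{-1}$ on the nose, reaching the same index-two contradiction. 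Your approach trades the (elementary but separate) fixed-point-free-automorphism fact for Byott's rigidity result, which the paper already relies on in Section~2.2; in that sense your argument is more uniform with the surrounding machinery, while the paper's argument is slightly more self-contained at this particular step. One minor imprecision: \cite{Byott simple} only tells you that $\lambda$ and $\rho$ are the only regular subgroups of $\Hol(A_n)$ \emph{isomorphic to $A_n$}, not the only regular subgroups outright; but since you have already established that your subgroup is isomorphic to $A_n$, this does not affect the argument.
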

\begin{proof}Using (\ref{fg relations}), it is easy to check that the map
\[ g : A_n \longrightarrow N/Z(N);\hspace{1em} g(\sigma) = \fg(\sigma)\widetilde{f}(\sigma) Z(N)\]
is a homomorphism. It cannot be trivial, for otherwise by picking a different $\widetilde{f}$ if necessary, we may assume that $\fg(\sigma) = \widetilde{f}(\sigma)^{-1}$ for all $\sigma\in A_n$. But then again by (\ref{fg relations}), for all $\sigma,\tau\in A_n$, we have
\[ \widetilde{f}(\sigma\tau)^{-1} = \fg(\sigma\tau) = \fg(\sigma)\cdot \ff(\sigma)(\fg(\tau)) = \widetilde{f}(\tau)^{-1}\widetilde{f}(\sigma)^{-1},\]
which means that $\widetilde{f}$ is in fact a homomorphism. This is impossible since $\widetilde{f}$ is injective and $N$ cannot have any subgroup of index two. We have thus shown that $g$ must be an isomorphism. Put $\varphi = f^{-1}\circ g$, which is an automorphism on $A_n$. If $\zeta\notin A_n$, then for any $\sigma\in A_n$, we would have
\[ \varphi(\sigma) = \sigma \implies f(\sigma) = g(\sigma) \implies \fg(\sigma) \in Z(N) \implies \sigma = 1,\]
namely $\varphi$ is fixed point free. But $A_n$ has no such automorphism since $n\geq5$. Hence, we must have $\zeta\in A_n$, as claimed.
\end{proof}

\begin{lem}\label{lem4}For any $\sigma \in S_n$, we have 
\[\sigma\zeta = \zeta\sigma\mbox{ if and only if }\widetilde{f}(\zeta)\fg(\sigma) = \fg(\sigma)\widetilde{f}(\zeta).\]
Moreover, the element $\zeta$ has order two.
\end{lem}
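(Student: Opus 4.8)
The plan is to push everything through the bijection $\fg$ and the relation (\ref{fg relations}), relying on a single structural fact: $\fg(\zeta)$ is the non-identity element of $Z(N)$, which is a characteristic subgroup of order two since $N\simeq 2A_n$. Consequently every automorphism of $N$ fixes it, so
\[ \ff(\sigma)(\fg(\zeta)) = \fg(\zeta)\mbox{ for \emph{every} }\sigma\in S_n. \]
Also $\fg(1)=1$ (a formal consequence of (\ref{fg relations}), since $\ff(1)=\mathrm{id}_N$), and $\zeta\neq 1$ because $\fg(\zeta)\neq 1=\fg(1)$ and $\fg$ is injective. These observations are what make the lemma a short computation.

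For the equivalence, I would expand (\ref{fg relations}) on the two products $\sigma\zeta$ and $\zeta\sigma$. On one side, using that $\ff(\sigma)$ fixes $\fg(\zeta)$ and that $\fg(\zeta)$ is central,
\[ \fg(\sigma\zeta) = \fg(\sigma)\cdot\ff(\sigma)(\fg(\zeta)) = \fg(\sigma)\fg(\zeta) = \fg(\zeta)\fg(\sigma). \]
On the other side, $\zeta\in A_n$ by Lemma~\ref{lem3}, so $\ff(\zeta)$ is conjugation by $\widetilde{f}(\zeta)$, giving
\[ \fg(\zeta\sigma) = \fg(\zeta)\cdot\ff(\zeta)(\fg(\sigma)) = \fg(\zeta)\cdot\widetilde{f}(\zeta)\,\fg(\sigma)\,\widetilde{f}(\zeta)^{-1}. \]
Since $\fg$ is injective, $\sigma\zeta=\zeta\sigma$ if and only if $\fg(\sigma\zeta)=\fg(\zeta\sigma)$; cancelling the central factor $\fg(\zeta)$ on the left of the two displayed expressions, this is equivalent to $\fg(\sigma)=\widetilde{f}(\zeta)\,\fg(\sigma)\,\widetilde{f}(\zeta)^{-1}$, i.e. to $\widetilde{f}(\zeta)\fg(\sigma)=\fg(\sigma)\widetilde{f}(\zeta)$, as claimed.

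For the order-two statement, I would apply (\ref{fg relations}) with $\sigma=\tau=\zeta$:
\[ \fg(\zeta^2) = \fg(\zeta)\cdot\ff(\zeta)(\fg(\zeta)) = \fg(\zeta)^2 = 1 = \fg(1), \]
again using that $\ff(\zeta)$ fixes $\fg(\zeta)$ and that $\fg(\zeta)$, being the generator of $Z(N)\simeq C_2$, has order two. Injectivity of $\fg$ then forces $\zeta^2=1$, and since $\zeta\neq 1$ we conclude that $\zeta$ has order two.

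I do not expect a genuine obstacle: the lemma is essentially bookkeeping with (\ref{fg relations}) and the injectivity of $\fg$. The one point that deserves an explicit word of justification is the repeatedly used equality $\ff(\sigma)(\fg(\zeta))=\fg(\zeta)$, which holds because $Z(N)$ is characteristic of order two and $\fg(\zeta)$ is its unique non-trivial element.
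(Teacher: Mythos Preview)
Your proposal is correct and follows essentially the same approach as the paper: expand $\fg(\sigma\zeta)$ and $\fg(\zeta\sigma)$ via (\ref{fg relations}), use that $\fg(\zeta)$ is fixed by every automorphism of $N$ since it generates the characteristic subgroup $Z(N)$ of order two, invoke Lemma~\ref{lem3} to identify $\ff(\zeta)$ with conjugation by $\widetilde{f}(\zeta)$, and use bijectivity of $\fg$ to conclude. Your write-up is in fact slightly more careful than the paper's, in that you explicitly justify $\fg(1)=1$ and $\zeta\neq1$ to get order exactly two.
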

\begin{proof}By (\ref{fg relations}), we have
\[\fg(\sigma\zeta) = \fg(\sigma)\cdot \ff(\sigma)(\fg(\zeta))\mbox{ and }
\fg(\zeta\sigma)  = \fg(\zeta)\cdot \ff(\zeta)(\fg(\sigma)).\]
Since $Z(N)$ has order two, its generator $\fg(\zeta)$ has to be fixed by all automorphisms on $N$. We then see that $\fg(\zeta^2) = \fg(\zeta)^2 = 1$ as well as
\[ \fg(\sigma\zeta) = \fg(\zeta\sigma) \mbox{ if and only if } \ff(\zeta)(\fg(\sigma)) = \fg(\sigma).\]
The claim then follows since $\zeta\in A_n$ by Lemma~\ref{lem3} and $\fg$ is bijective.
\end{proof}

By Lemmas~\ref{lem3} and~\ref{lem4}, we know that $\zeta$ has order two and lies in $A_n$. We may then assume, without loss of generality, that
\[ \zeta = (1\hspace{2mm}2)(3\hspace{2mm}4)\cdots (4r-1\hspace{2mm}4r)\]
Also, recall from Lemma~\ref{lem2} that $N\simeq 2A_n$, and write
\[ \widetilde{\zeta} = [1\hspace{2mm}2][3\hspace{2mm}4]\cdots [4r-1\hspace{2mm}4r] \] 
for some lift of $\zeta$ in $2A_n$, where the notation is as in \cite[Chapter 2.7.2]{Wilson}. Given any element $\gamma$ in a group $\Gamma$, let $\Cent_\Gamma(\gamma)$ denote the centralizer of $\gamma$ in $\Gamma$.

\begin{lem}\label{lem5}We have
\[\#\Cent_{S_n}(\zeta) = 2\cdot\#\Cent_{A_n}(\zeta) \mbox{ and } \#\Cent_{2A_n}(\widetilde{\zeta}) = \#\Cent_{A_n}(\zeta).\] 
\end{lem}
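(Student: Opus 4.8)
I would prove the two equalities separately, the first being immediate and the second requiring a small computation in the double cover.

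For $\#\Cent_{S_n}(\zeta) = 2\cdot\#\Cent_{A_n}(\zeta)$: the subgroup $\Cent_{A_n}(\zeta) = \Cent_{S_n}(\zeta)\cap A_n$ has index one or two in $\Cent_{S_n}(\zeta)$, with index two precisely when $\Cent_{S_n}(\zeta)$ contains an odd permutation. Since $(1\,2)$ is one of the $2$-cycles of $\zeta$, it commutes with $\zeta$, and it is odd; hence the index is two.

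For $\#\Cent_{2A_n}(\widetilde\zeta) = \#\Cent_{A_n}(\zeta)$, I would pass to the covering map $\pi\colon 2A_n\to A_n$, with kernel $Z = Z(2A_n) = \langle z\rangle$ of order two and $\pi(\widetilde\zeta) = \zeta$, and consider the map
\[
\phi\colon \Cent_{A_n}(\zeta)\longrightarrow Z,\qquad \phi(\sigma) = \widetilde\sigma\,\widetilde\zeta\,\widetilde\sigma^{-1}\widetilde\zeta^{-1},
\]
where $\widetilde\sigma$ is any $\pi$-preimage of $\sigma$. One checks the routine facts that $\phi$ is well defined (independent of $\widetilde\sigma$, as $z$ is central), takes values in $Z$ (as $\pi\circ\phi$ is trivial), and is a homomorphism (as $Z$ is central and abelian), with $\ker\phi = \pi(\Cent_{2A_n}(\widetilde\zeta))$. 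As $z$ commutes with $\widetilde\zeta$, the restriction of $\pi$ to $\Cent_{2A_n}(\widetilde\zeta)$ has kernel $Z$, so $\#\Cent_{2A_n}(\widetilde\zeta) = 2\cdot\#\ker\phi$, and $\#\ker\phi$ equals $\#\Cent_{A_n}(\zeta)$ or $\tfrac12\#\Cent_{A_n}(\zeta)$ according as $\phi$ is trivial or onto. Hence the claim is equivalent to the surjectivity of $\phi$, i.e.\ to $\widetilde\zeta$ and $z\widetilde\zeta$ being conjugate in $2A_n$.

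To produce $\sigma$ with $\phi(\sigma) = z$, I would localise to the first block of $\zeta$. Write $\widetilde\zeta = w_1w_2$ with $w_1 = [1\,2][3\,4]$ and $w_2 = [5\,6]\cdots[4r-1\,4r]$ (so $w_2 = 1$ when $r = 1$; note $r\geq 1$ since $\zeta\neq 1$). Using the relations of \cite[Chapter 2.7.2]{Wilson}, and in particular that lifts of disjoint transpositions anticommute in $2A_n$, one verifies that $w_1$ and $w_2$ commute (disjoint supports, both words of even length in the generators, so the sign factors cancel in pairs), that $w_1^2 = z$ (so $w_1$ has order four), and that $w_1$ lies in the $\pi$-preimage of the copy of $A_4$ supported on $\{1,2,3,4\}$, a nonsplit central extension and hence isomorphic to $2A_4\cong\mathrm{SL}(2,3)$, with $\pi(w_1) = (1\,2)(3\,4)$. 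Since $\mathrm{SL}(2,3)$ has exactly one conjugacy class of elements of order four and $zw_1$ is another such element, there is an element $\widetilde h$ of this preimage with $\widetilde h\,w_1\,\widetilde h^{-1} = zw_1$ — concretely, a lift of $(1\,3)(2\,4)$, the conjugation happening inside the quaternion subgroup lying over the Klein four-group. Set $h = \pi(\widetilde h)$, a permutation supported on $\{1,2,3,4\}$ which (applying $\pi$) centralises $(1\,2)(3\,4)$ and hence $\zeta$; since $\widetilde h$ commutes with $w_2$ (disjoint supports, even word lengths), we get $\widetilde h\,\widetilde\zeta\,\widetilde h^{-1} = (\widetilde h\,w_1\,\widetilde h^{-1})w_2 = z\widetilde\zeta$, i.e.\ $\phi(h) = z$. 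Thus $\phi$ is surjective, as needed.

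The first equality and the homomorphism/counting argument around $\phi$ are formal; I expect the only real obstacle to be the bookkeeping in the last step: extracting from Wilson's presentation the exact commutation relations in $2A_n$ (tracking parities so the $z$-factors cancel), confirming $w_1^2 = z$ (which should hold regardless of the sign convention for lifts of transpositions), and identifying the subgroup over $\{1,2,3,4\}$ with the nonsplit $2A_4$, so that the elementary fact about order-four elements of $\mathrm{SL}(2,3)$ (equivalently, of the quaternion subgroup over the Klein four-group) can be invoked.
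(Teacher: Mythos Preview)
Your proposal is correct and follows essentially the same route as the paper. Both arguments handle the first equality via the odd permutation $(1\ 2)\in\Cent_{S_n}(\zeta)$; for the second, your homomorphism $\phi$ is exactly the obstruction measuring the index of $\pi(\Cent_{2A_n}(\widetilde\zeta))$ in $\Cent_{A_n}(\zeta)$, and both proofs exhibit $(1\ 3)(2\ 4)$ as the element witnessing that this index is two. The only difference is in how this witness is verified: the paper simply cites Wilson's relations to compute $\widetilde\zeta\,[1\ 3][2\ 4]\,\widetilde\zeta^{-1}=z\,[1\ 3][2\ 4]$ directly, whereas you factor $\widetilde\zeta=w_1w_2$ and argue structurally inside the $\mathrm{SL}(2,3)$ over $\{1,2,3,4\}$ --- a slightly longer but equally valid check of the same identity.
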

\begin{proof}The first equality is obvious because $(1\hspace{2mm}2)\notin A_n$ commutes with $\zeta$. As for the second equality, note that the quotient map induces a homomorphism
\[ \Cent_{2A_n}(\widetilde{\zeta}) \longrightarrow \Cent_{A_n}(\zeta)\]
which is $2$-to-$1$ and whose image has index at most two. But
\[ \zeta(1\hspace{2mm}3)(2\hspace{2mm}4)\zeta^{-1} = (1\hspace{2mm}3)(2\hspace{2mm}4) \mbox{ while }\widetilde{\zeta}[1\hspace{2mm}3][2\hspace{2mm}4]\widetilde{\zeta}^{-1} = z[1\hspace{2mm}3][2\hspace{2mm}4]\]
in the notation of \cite[Chapter 2.7.2]{Wilson}, where $z\in Z(2A_n)$ is non-trivial. This means that the index is in fact equal to two, from which the claim follows.
\end{proof}

We are now ready to prove Proposition~\ref{prop1} (a). From Lemma~\ref{lem4} and the fact that $f$ is an isomorphism, we deduce that
\[ \#\Cent_{S_n}(\zeta) = \#\Cent_{N}(\widetilde{f}(\zeta)) = \#\Cent_{2A_n}(\widetilde{\zeta}).\]
But this contradicts Lemma~\ref{lem5}, whence $N$ cannot be perfect, as desired.

\subsection{Reducing to the groups $N$ which contain a copy of $A_n$} In what follows, suppose that $\fg$ is bijective, and we shall prove Proposition~\ref{prop1} (b)

\vspace{1.5mm}

By Proposition~\ref{prop1} (a), the group $N$ is not perfect, so it has a proper and maximal characteristic subgroup $M$ containing $[N,N]$. The quotient $N/M$ is then abelian, and by the proof of the second statement of \cite[Theorem 1.7]{Tsang HG}, we know that $A_n = \fg^{-1}(M) $. This means that $\fg$ restricts to a bijective map
\[ \res(\fg): A_n \longrightarrow M;\hspace{1em}\res(\fg)(\sigma) = \fg(\sigma).\]
Also, since $M$ is characteristic, the map $\ff$ induces a homomorphism
\[ \res(\ff) : A_n \longrightarrow \Aut(M);\hspace{1em}\res(\ff)(\sigma) = \ff(\sigma)|_M.\]
Clearly, it follows directly from (\ref{fg relations}) that
\[ \res(\fg)(\sigma\tau) = \res(\fg)(\sigma)\cdot (\res(\ff)(\sigma))(\res(\fg)(\tau))\mbox{ for all }\sigma,\tau\in A_n.\]
Analogous to (\ref{subgroup fg}), this implies that $\Hol(M)$ has a regular subgroup isomorphic to $A_n$. Since $A_n$ is non-abelian simple, we deduce from \cite{Byott simple} that $M\simeq A_n$. This proves that $N$ contains a copy of $A_n$, as desired.

\subsection{Classifying group extensions of $C_2$ by $A_n$} In what follows, suppose that $N$ contains a copy of $A_n$, and we shall prove Proposition~\ref{prop2}. Since any subgroup of index two is necessarily normal, the hypothesis implies that $N$ fits in a short exact sequence
\begin{equation}\label{SES}\begin{tikzcd}[column sep = 1cm] 1\arrow{r}& A_n \arrow{r}& N \arrow{r}& C_2 \arrow{r}& 1\end{tikzcd}\end{equation}
We shall write $A$ for the image of $A_n$ in $N$ and $\ep$ for the non-trivial element in $C_2$. Let us make two observations.


\begin{lem}\label{lem split} The short exact sequence (\ref{SES}) splits if $n\neq6$.
\end{lem}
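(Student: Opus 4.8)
The plan is to study the conjugation action of $N$ on its (normal, index-two) subgroup $A\simeq A_n$. Let $c\colon N\longrightarrow\Aut(A)$ be the homomorphism sending $\eta$ to conjugation by $\eta$, so that $\ker(c)=\Cent_N(A)$. Since $n\geq5$, the group $A_n$ has trivial center, so $c$ restricts to an isomorphism $A\xrightarrow{\ \sim\ }\Inn(A)$; in particular $\Cent_N(A)\cap A=1$, so $\Cent_N(A)$ injects into $N/A\simeq C_2$, and hence $|\Cent_N(A)|\in\{1,2\}$. The argument then splits into these two cases.

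In the case $|\Cent_N(A)|=2$: the subgroup $\Cent_N(A)$ is normal in $N$ (being a kernel), meets $A$ trivially, and $|A\cdot\Cent_N(A)|=2\,|A_n|=|N|$, so $N=A\times\Cent_N(A)$ is an internal direct product. Then $\Cent_N(A)$ is a complement to $A$ in $N$, which exhibits a splitting of (\ref{SES}) (and incidentally identifies $N\simeq A_n\times C_2$). In the case $|\Cent_N(A)|=1$: now $c$ is injective, so $c$ embeds $N$ into $\Aut(A)\simeq\Aut(A_n)$. This is the only point where the hypothesis $n\neq6$ enters: for $n\geq5$ with $n\neq6$, one has the classical fact that conjugation gives an isomorphism $\Aut(A_n)\simeq S_n$, a group of order $n!=2\,|A_n|=|N|$, so $c$ must be an isomorphism $N\xrightarrow{\ \sim\ }S_n$ carrying $A$ onto $A_n\leq S_n$. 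Since $S_n=A_n\rtimes\langle(1\hspace{2mm}2)\rangle$ splits over $A_n$, so does (\ref{SES}).

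I would close with a remark that for $n=6$ this last step genuinely fails: $\Aut(A_6)$ is strictly larger than $S_6$ because of the exceptional outer automorphism, and its index-two subgroups other than $S_6$, namely $M_{10}$ and $\mathrm{PGL}(2,9)$, are precisely the non-split extensions of $C_2$ by $A_6$ — which is exactly why those two groups appear in Proposition~\ref{prop2} and must be excluded here.

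As for the main obstacle: there is no genuinely hard step. The entire content is the dichotomy on $|\Cent_N(A)|$ forced by $Z(A_n)=1$, together with the input $\Aut(A_n)\simeq S_n$ for $n\neq6$; the only points requiring a little care are checking that in the first case one really obtains a complement (not merely an abstract direct-product decomposition) and that in the second case the embedding $N\hookrightarrow\Aut(A_n)$ is surjective, which follows from a cardinality count.
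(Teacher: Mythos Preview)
Your proof is correct and follows essentially the same approach as the paper: both analyze the conjugation homomorphism $N\to\Aut(A)$ and invoke $\Aut(A_n)\simeq S_n$ for $n\neq6$, splitting into the two cases according to whether the kernel $\Cent_N(A)$ is trivial or has order two (equivalently, in the paper's phrasing, whether $\varphi_{\eta_0}$ is outer or inner for a chosen $\eta_0\in N\setminus A$). The only cosmetic difference is that the paper explicitly constructs an order-two element $\eta_0a\in N\setminus A$ in each case, whereas you conclude by identifying $N$ structurally with $S_n$ or $A_n\times C_2$.
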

\begin{proof}For each $\eta\in N$, note that we have an automorphism
\[ \varphi_{\eta}\in\Aut(A); \hspace{1em} \varphi_{\eta}(x) = \eta\cdot x\cdot \eta^{-1}.\]
Fix an element $\eta_0\in N\setminus A$. Suppose now that $n\neq6$, so then $\Aut(A)\simeq S_n$, with $\Inn(A)$ corresponding to $A_n$. 
\begin{enumerate}[(1)]
\item If $\varphi_{\eta_0}\notin\Inn(A)$, then there exists $a\in A$ such that $\varphi_{\eta_0}\varphi_{a}$ has order two, which means that $(\eta_0a)^2$ centralizes $A$. 
\item  If $\varphi_{\eta_0}\in\Inn(A)$, then there exists $a\in A$ such that $\varphi_{\eta_0}\varphi_a$ is the identity, which means that $\eta_0a$ centralizes $A$.
\end{enumerate}
In both cases, since $(\eta_0a)^2 \in A$ and $Z(A)=1$, we deduce that $\eta_0a$ has order two. Thus, by sending $\ep\mapsto \eta_0a$, we obtain a homomorphism $C_2\longrightarrow N$ which splits the short exact sequence (\ref{SES}), as desired.
\end{proof}

\begin{lem}\label{lem semi}Let  $\psi:C_2\longrightarrow \Aut(A_n)$ be a homomorphism such that 
\[ \psi(\ep) : A_n\longrightarrow A_n;\hspace{1em}\psi(\ep)(x) = \sigma_0\cdot x\cdot\sigma_0^{-1}\]
for some $\sigma_0\in S_n$. Then, we have
\[ A_n\rtimes _\psi C_2 \simeq \begin{cases}
S_n & \mbox{if }\sigma_0\notin A_n,\\
A_n\times C_2 &\mbox{if }\sigma_0\in A_n.
\end{cases} \]
\end{lem}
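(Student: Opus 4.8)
The plan is to handle the two cases separately, both resting on the preliminary observation that $\sigma_0$ has order dividing two. Indeed, since $\psi$ is a homomorphism on $C_2$, conjugation by $\sigma_0^2$ acts trivially on $A_n$, and because $\Cent_{S_n}(A_n) = 1$ for $n\geq5$, this forces $\sigma_0^2 = 1$. Throughout, I write $\Gamma = A_n\rtimes_\psi C_2$, identify $A_n$ with the normal complemented subgroup, and let $\ep$ denote the chosen order-two generator of $C_2$, so that $\ep x\ep^{-1} = \sigma_0 x\sigma_0^{-1}$ for all $x\in A_n$.

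For the case $\sigma_0\notin A_n$, I would define a map $\Phi\colon\Gamma\to S_n$ by the identity on $A_n$ and $\ep\mapsto\sigma_0$. The relation $\ep^2 = 1$ is respected because $\sigma_0^2 = 1$, and the conjugation relation is respected because conjugation by $\sigma_0$ realizes $\psi(\ep)$ on $A_n$; hence $\Phi$ is a homomorphism. Its image strictly contains $A_n$ (it contains the odd permutation $\sigma_0$), so by maximality of $A_n$ in $S_n$ it is surjective, and comparing orders shows $\Phi$ is an isomorphism $\Gamma\simeq S_n$.

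For the case $\sigma_0\in A_n$, I would instead exhibit a central complement: set $z = \sigma_0^{-1}\ep$. A short computation with the defining relations — using $\psi(\ep)(\sigma_0^{-1}) = \sigma_0\sigma_0^{-1}\sigma_0^{-1} = \sigma_0^{-1}$, which holds since $\sigma_0\in A_n$ — shows $z^2 = \sigma_0^{-2}\ep^2 = 1$ and that $z$ centralizes both $A_n$ and $\ep$, hence lies in $Z(\Gamma)$. Since $z\notin A_n$ (its $C_2$-coordinate is non-trivial), we get $A_n\cap\langle z\rangle = 1$ and $|A_n\langle z\rangle| = 2|A_n| = |\Gamma|$, so $\Gamma = A_n\times\langle z\rangle\simeq A_n\times C_2$.

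I do not expect any serious obstacle here: the content is elementary group theory. The one place to be careful is the order-two claim for $\sigma_0$, which is exactly what makes the assignment $\ep\mapsto\sigma_0$ (resp. the element $z = \sigma_0^{-1}\ep$) behave correctly, and it follows at once from the triviality of $\Cent_{S_n}(A_n)$ when $n\geq5$.
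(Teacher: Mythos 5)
Your proof is correct and follows essentially the same route as the paper: in the first case your map $\Phi$ is exactly the paper's isomorphism $(\tau,\ep^i)\mapsto\tau\sigma_0^i$, just justified via the universal property of the semidirect product plus an order count, and in the second case your central involution $z=\sigma_0^{-1}\ep$ giving an internal direct product $A_n\times\langle z\rangle$ is the same structure the paper exhibits through the explicit isomorphism $(\tau,\ep^i)\mapsto(\tau\sigma_0^i,\ep^i)$. Your justification that $\sigma_0^2=1$ via the triviality of $\Cent_{S_n}(A_n)$ for $n\geq5$ is a fine way to make precise the paper's remark that $\sigma_0$ has order dividing two.
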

\begin{proof}Note that $\sigma_0$ must have order dividing two. Also, we have
\[  (\tau_1,\ep^{i_1})(\tau_2,\ep^{i_2}) = (\tau_1\cdot \sigma_0^{i_1}\tau_2\sigma_0^{-i_1},\ep^{i_1+i_2})\mbox{ in } A_n\rtimes_\psi C_2\]
for all $\tau_1,\tau_2\in A_n$ and $i_1,i_2\in\mathbb{Z}$. It is then straightforward to verify that
\[ A_n\rtimes_\psi C_2 \longrightarrow S_n;\hspace{1em}(\tau,\ep^i)\mapsto \tau\sigma_0^i\]
defines an isomorphism if $\sigma_0\notin A_n$, and 
\[ A_n\rtimes_\psi C_2 \longrightarrow A_n\times C_2;\hspace{1em}(\tau,\ep^i)\mapsto (\tau\sigma_0^i,\ep^i)\]
defines an isomorphism if $\sigma_0\in A_n$. This proves the claim.
\end{proof}

We are now ready to prove Proposition~\ref{prop2}. For $n=6$, the claim may be verified using the \textsc{magma} code $2$ in the appendix. For $n\neq6$, the claim is a direct consequence of Lemmas~\ref{lem split} and~\ref{lem semi} because then $\Aut(A_n)\simeq S_n$.

\section{Acknowledgments}

The author would like to thank the referee for some helpful comments. She would also like to recognize the software \textsc{magma} \cite{magma} and \textsc{gap} \cite{GAP} which were used in the computations to deal with the cases $n=4,6$.

\addresseshere

\begin{landscape}
\section*{Appendix: \textsc{magma} codes}

\textsc{magma} code $1$:
\begin{lstlisting}[
  mathescape,
  columns=fullflexible,
  basicstyle=\ttfamily,
]
G:=SymmetricGroup(4); 
AutG:=AutomorphismGroup(G);
NN:=SmallGroups(24);
for i in [1..#NN] do
N:=SmallGroup(24,i);
AutN:=AutomorphismGroup(N);
Hol:=Holomorph(N);
RegSub:=RegularSubgroups(Hol);
L:=[0]; //  sizes of the conjugacy classes of regular subgroups in Hol(N) isomorphic to G
  for R in RegSub do
    if IsIsomorphic(R`subgroup,G) then
    Append(~L,R`length);
    end if;
  end for;
  E:=(#AutG/#AutN)*&+L; // formula (1.1)
  if E ne 0 then
    print <i,E>;
  end if;
end for;
\end{lstlisting}

\textsc{magma} code $1$ output:
\begin{lstlisting}[
  mathescape,
  columns=fullflexible,
  basicstyle=\ttfamily,
]
<12,8> // SmallGroup(24,12) = S4
<13,36> // SmallGroup(24,13) = A4 x C2
<14,24> // SmallGroup(24,14) = S3 x C2 x C2
<15,48> // SmallGroup(24,15) = C6 x C2 x C2
// The identifications of SmallGroup(24,i) here may be checked using
the StructureDescription() or IdGroup() command in $\textsc{gap}$.
\end{lstlisting}

\textsc{magma} code $2$:
\begin{lstlisting}[
  mathescape,
  columns=fullflexible,
  basicstyle=\ttfamily,
]
A6:=AlternatingGroup(6);
NN:=SmallGroups(720);
for i in [1..#NN] do
N:=SmallGroup(720,i);
  if not IsSolvable(N) then // for N to contain a copy of A6 necessarily N is insolvable
  NorSub:=NormalSubgroups(N);
  AA:=[A:A in NorSub|IsIsomorphic(A`subgroup,A6)];
    if not IsEmpty(AA) then
    i;
    end if;
  end if;
end for;
\end{lstlisting}

\textsc{magma} code $2$ output:
\begin{lstlisting}[
  mathescape,
  columns=fullflexible,
  basicstyle=\ttfamily,
]
763 // SmallGroup(720,763) = S6
764 // SmallGroup(720,764) = PGL(2,9)
765 // SmallGroup(720,765) = M10
766 // SmallGroup(720,766) = A6 x C2
// The identifications of SmallGroup(24,i) here may be checked using
the StructureDescription() or IdGroup() command in $\textsc{gap}$.
\end{lstlisting}
\end{landscape}

\end{document}